% uses smfart.cls

\documentclass[11pt]{smfart}
\usepackage{color}
\usepackage{amssymb,verbatim}
\usepackage{hyperref}
\usepackage{amsthm,array,amssymb,amscd,amsfonts,amssymb,latexsym, url}
\usepackage{amsmath}
\usepackage[all]{xy}
\usepackage[french]{babel}

\newcounter{spec}
{\end{list}}

\renewcommand{\P}{{\mathbf P}}

\newcommand{\Z}{{\mathbb Z}}
\newcommand{\Q}{{\mathbb Q}}

\newcommand{\Br}{{\operatorname{Br   }}}

\newcommand{\Hom}{{\operatorname{Hom}}}

\newcommand{\Res}{{\operatorname{Res}}}
\newcommand{\Spec}{{\operatorname{Spec \ }}}

\newcommand{\Norm}{{\operatorname{Norm}}}

\renewcommand{\lim}{\varprojlim}

\numberwithin{equation}{section}

\newfont{\gothic}{eufb10}

%%%%%

\newtheorem{theo}{Th\'{e}or\`{e}me}[section]
\newtheorem{prop}[theo]{Proposition}

\newtheorem{lem}[theo]{Lemme}
\newtheorem{cor}[theo]{Corollaire}
\theoremstyle{definition}
\newtheorem{defi}[theo]{D\'efinition}
\theoremstyle{remark}
\newtheorem{rema}[theo]{Remarque}

\newcommand{\bthe}{\begin{theo}}
\newcommand{\ble}{\begin{lem}}
\newcommand{\bpr}{\begin{prop}}
\newcommand{\bco}{\begin{cor}}
\newcommand{\bde}{\begin{defi}}
\newcommand{\ethe}{\end{theo}}
\newcommand{\ele}{\end{lem}}
\newcommand{\epr}{\end{prop}}
\newcommand{\eco}{\end{cor}}
\newcommand{\ede}{\end{defi}}

\newcommand{\Pic}{\operatorname{Pic}}

\newcommand{\G}{{\mathbb G}}

\def\X{{\overline X}}
\def\k{{\overline k}}
% LE RUSSE  %%%%%%%%
%

\DeclareFontFamily{U}{wncy}{}
\DeclareFontShape{U}{wncy}{m}{n}{%
<5>wncyr5%
<6>wncyr6%
<7>wncyr7%
<8>wncyr8%
<9>wncyr9%
<10>wncyr10%
<11>wncyr10%
<12>wncyr6%
<14>wncyr7%
<17>wncyr8%
<20>wncyr10%
<25>wncyr10}{}
\DeclareMathAlphabet{\cyr}{U}{wncy}{m}{n}
%%%%%%%%%%%%%%%%%%

 \begin{document}

  \title[Droites sur les surfaces cubiques ] {Normes de droites sur les surfaces cubiques}

\author{J.-L. Colliot-Th\'el\`ene}
\address{Laboratoire de Math\'ematiques d'Orsay, Univ. Paris-Sud, CNRS, Universit\'e Paris-Saclay, 91405 Orsay, France}
\email{jlct@math.u-psud.fr}

\author{D. Loughran}
\address{School of Mathematics\\ University of Manchester\\ Oxford Road\\ Manchester\\ M13 9PL\\ United Kingdom}
\email{daniel.loughran@manchester.ac.uk}

\date{soumis le 19 septembre 2017; r\'evis\'e le 21 janvier 2018; accept\'e le 9 mai 2018}
\maketitle

 \begin{altabstract}
 Let $k$ be a field and $X \subset \P^3_{k}$ a smooth cubic  surface.
Let $\Delta=\Delta(X)  \subset \Pic(X)$ be the  
 subgroup spanned
by norms to $k$ of $K$-lines on $X_{K}=X \times_{k}K$ for $K$ running through the finite
separable extensions of $k$. The quotient $\Pic(X)/\Delta$
is a finite, 3-primary group. If $X$ contains a line defined over $k$, then $\Delta=\Pic(X)$.
  \end{altabstract}

\section{Introduction}

Soient $k$ un corps
et  $X \subset \P^3_{k}$ une $k$-surface cubique projective lisse. 
 Soit $\k$ une cl\^oture s\'eparable de $k$. Notons   $\X=X\times_{k}\k$.
  Soit $\Delta=\Delta(X) \subset \Pic(X)$ le sous-groupe 
  engendr\'e par toutes les classes de diviseurs $\Norm_{K/k}(L)$ dans $\Pic(X)$,
  pour $K/k$ parcourant les extensions finies s\'eparables de $k$ et, pour
  $K$ donn\'e, $L$ parcourant les droites de $X_{K} = X_{K}=X \times_{k}K \subset \P^3_{K}$
  qui sont d\'efinies sur $K$.
Un r\'esultat classique
 dit que le groupe de Picard $\Pic(\X)$ est engendr\'e par les droites de $\X$.

 \begin{theo}\label{thmgeneral}
 Soient $k$ un corps et  $X\subset \P^3_{k}$ une $k$-surface cubique lisse.
Soit $\Delta \subset \Pic(X)$ le sous-groupe  engendr\'e par
les classes de normes de droites sur $X_{K}$ pour $K/k$
parcourant les extensions finies de $k$.

(i) Le quotient $\Pic(X)/\Delta$ est un groupe fini 3-primaire.

(ii) Si $X$ poss\`ede une droite d\'efinie sur $k$, alors $\Delta=\Pic(X)$.
\end{theo}
 
 Ce th\'eor\`eme est \'etabli au paragraphe \S \ref{toutescubiques}.
 Pour \'etablir ce r\'esultat, on commence par \'etudier (\S \ref{cubiquesavecdroite})
  le cas des surfaces cubiques lisses poss\'edant une droite rationnelle.
 \`A toute telle droite est associ\'ee une fibration en coniques.
 Lorsque la fibration admet une section, on montre qu'il existe une
 section donn\'ee par une droite (Th\'eor\`eme \ref{sectiondroite}).
 
Le th\'eor\`eme \ref{thmgeneral} (ii) est un analogue en dimension 2 d'un r\'esultat de M.~Shen  \cite[Thm. 1.7]{shen} sur
les  hypersurfaces cubiques de dimension au moins 3
poss\'edant une droite d\'efinie sur $k$, 
r\'esultat qui nous a \'et\'e signal\'e par O. Wittenberg.

On utilise librement les propri\'et\'es des surfaces cubiques lisses et des surfaces de del Pezzo,
comme on peut les trouver par exemple dans \cite{beauville} et \cite{manin}. On utilise en particulier le fait
qu'une surface cubique lisse   sur un corps $\k$ s\'eparablement clos est d\'eploy\'ee :
les 27 droites sont d\'efinies sur $\k$. 
Rappelons que deux droites de l'espace projectif
sont dites gauches (l'une \`a l'autre) si elles ne se rencontrent pas.
 
Nous remercions le rapporteur de sa lecture attentive.

\section{Surfaces cubiques lisses poss\'edant une droite rationnelle}\label{cubiquesavecdroite}

Commen\c cons par un lemme g\'en\'eral, sans doute classique.
\begin{lem}\label{2div}
 Soit $X$ une surface cubique lisse sur un corps $k$ s\'eparablement clos. 
Soit $\omega \in \Pic(X)$ la classe du faisceau canonique.
Soient $5$ droites gauches $L_{i}$, $i=1,\dots,5$ sur $X$.
 
Les deux conditions suivantes sont \'equivalentes :

\begin{itemize}
	\item[(i)] Il existe une sixi\`eme droite gauche aux 5 droites.

	\item[(ii)] La somme $\sum_{i=1}^5 L_{i} -\omega $ n'est pas divisible par 2 dans $\Pic(X)$.
\end{itemize}
Si elles sont satisfaites, la sixi\`eme droite gauche est uniquement  d\'efinie.
\end{lem}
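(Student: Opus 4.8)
The plan is to translate everything into the lattice $\Pic(X)$, which for a split smooth cubic surface is the unimodular lattice of signature $(1,6)$ for the intersection form, with $\omega^2=3$ and with each of the $27$ lines $L$ satisfying $L^2=-1$, $L\cdot\omega=-1$; two lines are skew exactly when their intersection number is $0$, and the lines are precisely the classes $L$ with $L^2=-1$ and $L\cdot\omega=-1$. The extra input I would record at the outset is that $\omega$ is a \emph{characteristic} vector of $\Pic(X)$: Riemann--Roch gives $\tfrac12\bigl(D^2-D\cdot\omega\bigr)\in\Z$, hence $D\cdot\omega\equiv D^2\pmod 2$ for every $D\in\Pic(X)$. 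First I would set $N=\langle L_1,\dots,L_5\rangle$; since the $L_i$ are skew with $L_i^2=-1$, its Gram matrix is $-\mathrm{Id}_5$, so $N$ is unimodular and $\Pic(X)=N\perp N^\perp$ with $N^\perp$ unimodular of rank $2$ and signature $(1,1)$.

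Next I would locate $\omega$ relative to this splitting. Projecting onto $N$ gives $\omega_N=\sum_{i=1}^5 L_i$ (because $\omega\cdot L_i=-1=-L_i^2$), so $\omega=\sum L_i+\omega_\perp$ with $\omega_\perp=-M$, where $M=\sum_{i=1}^5 L_i-\omega$ is the class in the statement; a direct computation gives $M\in N^\perp$ and $M^2=8$. Being the $N^\perp$-component of the characteristic vector $\omega$, the class $\omega_\perp=-M$ is itself characteristic for $N^\perp$. The key reformulation is then: a sixth line skew to $L_1,\dots,L_5$ is exactly a class $x\in N^\perp$ with $x^2=-1$ and $x\cdot\omega=-1$. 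Indeed $x\in N^\perp$ encodes skewness $x\cdot L_i=0$, while $x^2=-1$ together with $x\cdot\omega=-1$ forces $x$ to be one of the $27$ lines (effectivity follows from Riemann--Roch, since $\omega-x$ has negative degree against the ample class $-\omega$).

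The crux is the dichotomy for $N^\perp$: being indefinite unimodular of rank $2$, it is either the odd lattice $\langle 1\rangle\perp\langle-1\rangle$ or the even hyperbolic plane $U$. On one hand, $N^\perp$ contains a vector of square $-1$ iff it is the odd lattice; in the odd case the only such classes are $\pm f$ with $f^2=-1$, and exactly one of them pairs to $-1$ with $\omega$, which both produces the sixth line and proves its uniqueness, whereas $U$ has no vector of odd square, so no sixth line exists. On the other hand, the characteristic vector $\omega_\perp=-M$ detects the parity: in $U$ every characteristic vector lies in $2U$, so $M$ is divisible by $2$; in $\langle1\rangle\perp\langle-1\rangle$ every characteristic vector has both coordinates odd, so $M$ is not divisible by $2$. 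Since $M\in N^\perp$ and the sum $\Pic(X)=N\perp N^\perp$ is orthogonal, divisibility of $M$ by $2$ in $\Pic(X)$ is the same as in $N^\perp$. Combining the two halves gives (i)$\iff$(ii) together with uniqueness.

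I expect the main obstacle to be conceptual rather than computational: one must resist fixing a sixth line from the start (which would beg the question), and instead see that the two genuinely different combinatorial types of five skew lines correspond to the two possible parities of $N^\perp$. What makes this subtle is that cancellation fails for odd unimodular lattices—both $\langle1\rangle\perp\langle-1\rangle$ and $U$ complete $\langle-1\rangle^{\oplus5}$ to the signature-$(1,6)$ form—so the isometry type of $N^\perp$ is \emph{not} forced by the ambient lattice alone and really depends on the configuration. The characteristic-vector invariant $M\bmod 2$ is precisely what separates the two cases, which is why the divisibility condition in (ii) is the correct criterion.
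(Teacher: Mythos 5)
Your proof is correct, but it takes a genuinely different route from the paper's. The paper argues via contractions: if a sixth skew line exists, it contracts all six to $\P^2$ and reads off $\sum L_i-\omega=3\lambda_0-L_6$ in the basis $(\lambda_0,L_1,\dots,L_6)$; if not, it contracts the five lines to $\P^1\times\P^1$ and finds $\sum L_i-\omega=2e_1+2e_2$; uniqueness is then read off from the six-point model of the $27$ lines. You never choose a birational model: you split off the unimodular sublattice $N=\langle L_1,\dots,L_5\rangle\cong\langle-1\rangle^{\oplus 5}$, identify a sixth skew line with a class $x\in N^{\perp}$ satisfying $x^2=x\cdot\omega=-1$, and let the classification of rank-$2$ indefinite unimodular lattices (odd $\langle1\rangle\perp\langle-1\rangle$ versus even $U$) do the work, the characteristic vector $\omega_{\perp}=-M$ detecting the parity. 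What this buys is independence from the contraction arguments (you only need the numerical characterization of the $27$ lines plus Riemann--Roch for effectivity) and a conceptual explanation of \emph{why} divisibility by $2$ is the right invariant: it is exactly the even/odd dichotomy for $N^{\perp}$, which, as you rightly note, is not forced by the ambient lattice since cancellation fails for odd unimodular lattices. The one step you should make explicit is the claim that in the odd case exactly one of $\pm f$ pairs to $-1$ with $\omega$: a priori one could have $f\cdot\omega=\pm 3$ and then no sixth line would exist even when $N^{\perp}$ is odd, breaking the equivalence. The verification is short and uses data you already computed: writing $\omega_{\perp}=ae+bf$ with $e^2=1$, $f^2=-1$, the characteristic property forces $a,b$ odd, and $\omega_{\perp}^2=M^2=8$ gives $a^2-b^2=8$, whence $b=\pm1$ and $f\cdot\omega_{\perp}=\mp1$; so exactly one of $\pm f$ is the sixth line, which also yields the uniqueness statement.
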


\begin{proof}
Soit $L_{6}$ une droite gauche aux $L_{i}$, $i=1,\dots,5$. En contractant tous les $L_{i}$, $i=1, \dots, 6$,
 on obtient le plan $\P^2$.
La classe canonique $\omega$ de $X$ est   $-3 \lambda_{0}+ \sum_{i=1}^6 L_{i}$, o\`u $\lambda_{0}$ est donn\'e par  l'image r\'eciproque d'une droite de $\P^2$ (c'est une cubique gauche dans $X \subset \P^3$).
Le groupe $\Pic(X)$ est le groupe ab\'elien libre sur les g\'en\'erateurs $\lambda_{0}, L_{1},�\dots,L_{6}$.
On a  $\sum_{i=1}^5 L_{i} -\omega = 3\lambda_{0} -L_{6}$, non divisible par 2 dans $\Pic(X)$.

S'il n'existe pas de sixi\`eme droite gauche aux 5 droites, alors en contractant ces 5 droites,
on obtient $\P^1 \times \P^1$. La classe canonique $\omega$ sur $X$ est alors  
$\omega= -2e_{1}-2e_{2}+ \sum_{i=1}^5 L_{i}$, o\`u $e_{1}$ et $e_{2}$ sont les images r\'eciproques des
g\'en\'eratrices de  $\P^1 \times \P^1$. Le groupe $\Pic(X)$ est le groupe ab\'elien libre sur les
$e_{1},e_{2}, L_{1}, \dots, L_{5}$. On a ici  $\sum_{i=1}^5 L_{i}-\omega = 2e_{1}+2e_{2}$ divisible par 2
dans $\Pic(X)$.

S'il existe une sixi\`eme droite gauche, on consid\`ere la contraction des 6 droites en 6 points $P_{1}, \dots, P_{6}$ en
position g\'en\'erale dans $\P^2$.
Les 27 droites de $X$ correspondent aux 6 points, aux 15 droites passant par deux des 6 points et aux 6 coniques par 5 des 6 points.
On voit ais\'ement  sur ce mod\`ele que la droite de $X$ correspondant \`a $P_{6}$ est la seule droite gauche aux
droites $L_{i}$, $i=1,\dots,5$.
\end{proof}

\begin{prop}\label{generalfibre}
Soient $k$ un corps et $X$ une $k$-surface cubique lisse poss\'edant une $k$-droite $L$.
 Soit $f : X \to \P^1_{k}$ la fibration en coniques d\'efinie par la famille
 des 2-plans contenant la $k$-droite $L$.
 
 (i)  Cette fibration poss\`ede 5 fibres g\'eom\'etriques r\'eductibles, chacune
 compos\'ee de 2 droites concourantes de $\X$.
 
 (ii) Le morphisme induit $L \to \P^1_{k}$  induit par $f$ est de degr\'e 2.
 
 (iii) La fibre g\'en\'erique $X_{\eta}$ est une conique lisse sur le corps $k(\P^1)$
 qui poss\`ede un point de degr\'e 2 d\'efini par la restriction de $L$.
\end{prop}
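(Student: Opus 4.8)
The plan is to describe $f$ via the residual--conic decomposition and then establish the three parts, treating (ii) first since its intersection computation feeds the other two. First I would fix the geometry: a $2$-plane $H \subset \P^3_k$ containing $L$ meets $X$ in a plane cubic curve that contains $L$, so $H \cap X = L \cup C_H$ with $C_H$ a residual conic. As $H$ runs over the pencil of $2$-planes through $L$, which is a $\P^1_k$, the residual conics $C_H$ are precisely the fibres of $f$, and in $\Pic(X)$ the class of a fibre is $F = -\omega - L$, where $-\omega$ denotes the hyperplane class.

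For (ii), the degree of the map $f|_L \colon L \to \P^1_k$ is the intersection number $L \cdot F$ of $L$ with a fibre. Since $L$ is a line on a cubic surface it is a $(-1)$-curve, so $L^2 = -1$ and $L \cdot (-\omega) = 1$; hence $L \cdot F = L \cdot (-\omega) - L^2 = 1 - (-1) = 2$. As $L$, $F$ and $f$ are all defined over $k$, this already computes the degree over $k$.

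For (i) I would base change to $\k$ and use the blow-up model: over $\k$ the surface $\X$ is the blow-up of $\P^2$ at six points in general position, and since the automorphism group of the configuration of the $27$ lines acts transitively on them I may assume $L = E_6$. A fibre $C_H$ is reducible exactly when $H$ is a tritangent plane through $L$, i.e. when $F = \ell' + \ell''$ with $\ell' + \ell'' + L = -\omega$ for two lines $\ell', \ell''$. A direct check in the basis $\lambda_0, E_1, \dots, E_6$ of $\Pic(\X)$ shows that the pairs residual to $E_6$ are exactly $\{\ell_{i6}, C_i\}$ for $i = 1, \dots, 5$, where $\ell_{i6}$ is the line through $P_i$ and $P_6$ and $C_i$ is the conic through the five points other than $P_i$; this gives five tritangent planes through $L$. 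Moreover $\ell_{i6} \cdot C_i = 1$, so each of these five fibres is a pair of distinct lines meeting in a single point. (Over $\C$ one can cross-check the count through the Euler characteristic: $\chi(\X) = 9$, a smooth fibre contributes $2$ and a reducible one $3$, whence the number $r$ of reducible fibres satisfies $4 + r = 9$.)

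The delicate point is (i): one must know not only that these five planes give reducible fibres but that there is no further degeneration --- in particular no fibre is a double line. This is exactly what smoothness of $X$ guarantees and what the computation $\ell_{i6} \cdot C_i = 1$ makes explicit, since a rank-one (double-line) fibre would force a singular point of the total space. Granting (i), part (iii) is then formal: $f$ has only the five geometrically reducible fibres just found, so the generic fibre $X_\eta$ is a smooth conic over $K = k(\P^1)$; and the scheme-theoretic intersection $L \cap X_\eta$ is the generic fibre of the degree-$2$ morphism $f|_L$ of (ii). Since $L \simeq \P^1_k$, this generic fibre is the spectrum of a degree-$2$ field extension of $K$, that is, a closed point of degree $2$ on $X_\eta$ defined by $L$, as asserted.
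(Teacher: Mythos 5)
Your proof is correct and follows essentially the same route as the paper: the key point is the intersection number $L\cdot F=2$ with $F=-\omega-L$ the class of a fibre, which is exactly the computation the paper records, and (iii) follows formally. The only difference is that for (i) the paper simply cites the known structure of the degenerate fibres (Beauville, Lemme IV.15), whereas you reprove it explicitly via the blow-up model and the exclusion of double lines by smoothness of $X$; your enumeration of the five pairs $\{\ell_{i6},C_i\}$ and the Euler characteristic cross-check are both accurate.
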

 \begin{proof}  
 La structure des mauvaises fibres g\'eom\'etriques de $f$ est bien connue
 \cite[Lemme IV.15]{beauville}. Pour la classe d'une fibre $F$ on a $L \cdot F=2$.
  \end{proof}

 Le th\'eor\`eme qui suit a un air \guillemotleft \,classique \guillemotright,
 mais ne semble pas se trouver dans la litt\'erature. 

 \begin{theo} \label{sectiondroite}
Soient $k$ un corps et $X$ une $k$-surface cubique lisse poss\'edant une $k$-droite $L$.
 Soit $f : X \to \P^1_{k}$ la fibration en coniques d\'efinie par la famille
 des 2-plans contenant la $k$-droite $L$.  Alors $f$ poss\`ede une section
 si et seulement s'il existe une $k$-droite $L' \subset X$ qui ne rencontre pas $L$.
Toute telle droite est une section de $f$.
  \end{theo}  
\begin{proof}
Une direction est facile et bien connue.
 Soit $F$ la classe d'une fibre  sur un $k$-point. La classe du diviseur $L + F$ est la classe anticanonique, c'est-\`a-dire la classe d'une section plane de $X$.
Soit  $L' \subset X$  une  $k$-droite qui ne rencontre pas $L$. De  $((L + F) \cdot L') = 1$ et
  $L \cdot L' = 0$ on d\'eduit $L' \cdot F = 1$. Ainsi   $L'$ d\'efinit une section de $f$.

 Pour l'autre direction, supposons donn\'ee $s$ une section de $f$, identifi\'ee \`a son image dans $X$.    Consid\'erons l'ensemble $\mathfrak{L}$ des
$10$ droites de $\X$ qui rencontrent $L$.
Comme $\X$ est lisse, la section $s$ rencontre chaque fibre de $f$ en un point lisse
de la fibre.
En particulier, elle rencontre chaque fibre singuli\`ere  de $\overline{f}=f\times_{k}\k$ en un point d'une seule des deux $\k$-droites formant la fibre.
On trouve que $\mathfrak{L}$ est la r\'eunion de  deux  ensembles Galois-invariants $\mathfrak{L}_1$ et $\mathfrak{L}_2$, 
qui chacun consiste de $5$ droites qui ne se rencontrent pas deux \`a deux. 
La somme $\mathfrak{L}_1 + \mathfrak{L}_2$ a pour classe $5F \in \Pic(\X)$, o\`u $F$ d\'esigne une fibre
de $X \to \P^1_{\k} $ au-dessus d'un $\k$-point. Le nombre d'intersection $((\mathfrak{L}_1 + \mathfrak{L}_2) \cdot s)$
est donc \'egal \`a $5$.   
Mais alors on ne peut avoir \`a la fois $\mathfrak{L}_1-\omega$ et $ \mathfrak{L}_2-\omega$ divisibles par $2$
dans $\Pic(\X)$, sinon $\mathfrak{L}_1 + \mathfrak{L}_2$ serait divisible par 2 dans $\Pic(\X)$. Ainsi, d'apr\`es le lemme \ref{2div}, pour $j=1$ ou $j=2$, il existe   une droite $L'$  sur $\X$
qui ne rencontre aucune des droites de $\mathfrak{L}_j$, et cette droite est unique et donc d\'efinie sur $k$.
Elle est distincte des droites de $\mathfrak{L}_1$ et $\mathfrak{L}_2$, qui sont toutes les droites rencontrant $L$,
donc elle ne rencontre pas $L$.
\end{proof}

\begin{prop}\label{generalfibre2}
Soient $k$ un corps et $X$ une $k$-surface cubique lisse poss\'edant une $k$-droite $L$.
 Soit $f : X \to \P^1_{k}$ la fibration en coniques d\'efinie par la famille
 des 2-plans contenant la $k$-droite $L$. Soit $O \in \P^1(k)$ un $k$-point et $X_{O}$ la fibre. 
 Soit $I$ l'ensemble des composantes
 irr\'eductibles des fibres $X_{P}$, pour $P$ un point ferm\'e de $\P^1_{k}$ dont la fibre
 $X_{P}$ n'est pas g\'eom\'etriquement int\`egre sur le corps r\'esiduel $k(P)$. Notons $C_{i}, i\in I,$ ces
 diverses composantes. 
 
 (i)  La restriction \`a la fibre g\'en\'erique donne naissance \`a une suite
 exacte $$ \Z X_{0} \oplus_{i \in I} \Z C_{i} \to \Pic(X) \to \Pic(X_{\eta}) \to 0.$$
 
 (ii) La classe de $L$ et la classe de chacune des  courbes $C_{i}$ appartient au sous-groupe 
$ \Delta \subset \Pic(X)$.  

(iii) Le groupe $\Pic(X)/\Delta$ est un quotient de $ \Z/5$, et est engendr\'e par la classe de $X_{O}$.
\end{prop}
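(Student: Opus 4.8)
The plan is to treat the three parts in order: part (i) is an instance of the localization exact sequence for Picard groups, part (ii) is a direct identification of the relevant classes as norms of lines, and part (iii) is a synthesis of the first two together with Th\'eor\`eme \ref{sectiondroite}.

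For (i), I would invoke the standard localization sequence for the Picard group of the regular scheme $X$ fibr\'e over the smooth curve $\P^1_k$. Restriction to the generic fibre $\Pic(X) \to \Pic(X_{\eta})$ is surjective, since a line bundle on $X_{\eta}$ extends to $X$ by taking the Zariski closure of a representing divisor ($X$ being regular), and its kernel is generated by the classes of the vertical prime divisors, that is, the irreducible components of the closed fibres. These vertical classes all restrict to $0$ on $X_{\eta}$, so the composite $\Z X_{O} \oplus \bigoplus_{i\in I}\Z C_{i} \to \Pic(X) \to \Pic(X_{\eta})$ is zero. For a closed point $P$ whose fibre $X_{P}$ is g\'eom\'etriquement int\`egre, the unique component is $X_{P}=f^{*}(P)$, of class $\deg(P)\,[X_{O}]$, hence lies in $\Z[X_{O}]$; for $P$ with $X_{P}$ non g\'eom\'etriquement int\`egre the components are exactly the $C_{i}$. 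Grouping the generators accordingly yields the asserted sequence.

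For (ii), the class $[L]=\Norm_{k/k}(L)$ lies in $\Delta$ by definition. For each $C_{i}$, Proposition \ref{generalfibre} (i) says the non g\'eom\'etriquement int\`egres fibres are, sur $\k$, r\'eunions de droites de $\X$, so every geometric component of the $k$-integral curve $C_{i}$ is one of the $27$ lines, and these geometric components form a single Galois orbit. Since the Fano scheme of lines of a smooth cubic surface is \'etale sur $k$, a representative line $\ell$ is defined over a finite separable extension $K_{i}/k$ with $[K_{i}:k]$ the size of the orbit, and the projection $X_{K_{i}} \to X$ carries $\ell$ birationally onto $C_{i}$; hence $[C_{i}]=\Norm_{K_{i}/k}(\ell)\in\Delta$.

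For (iii), I would combine the two. By (i), modulo the subgroup generated by $[X_{O}]$ and the $[C_{i}]$, the group $\Pic(X)$ is isomorphic to $\Pic(X_{\eta})$, and $X_{\eta}$ being a smooth conic, $\Pic(X_{\eta})\cong\Z$. The crucial point is to exhibit a generator of $\Pic(X_{\eta})$ that lifts into $\Delta$. If $X_{\eta}$ has no rational point it is anisotrope and $\Pic(X_{\eta})$ is generated by the degree-$2$ class, which is the restriction of $[L]\in\Delta$ (Proposition \ref{generalfibre} (iii)). If $X_{\eta}$ has a rational point, then $f$ admits a section, so by Th\'eor\`eme \ref{sectiondroite} there is a $k$-droite $L'\subset X$ ne rencontrant pas $L$; it is a section, so $[L']$ restricts to the degree-$1$ generator of $\Pic(X_{\eta})$, and $[L']\in\Delta$. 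In either case, as the $[C_{i}]$ lie in $\Delta$, the sequence of (i) shows $\Pic(X)/\Delta$ is generated by the single class $[X_{O}]$. Finally, summing the relations $[X_{P}]=\sum_{C_{i}\subset X_{P}}[C_{i}]$ (the bad fibres being reduced) over the closed points with non g\'eom\'etriquement int\`egre fibre, and using that the total degree of the degenerate fibres is $5$ (the five fibres of Proposition \ref{generalfibre} (i)), gives $\sum_{i\in I}[C_{i}]=5[X_{O}]$; since each $[C_{i}]\in\Delta$ this forces $5[X_{O}]\in\Delta$. Hence $\Pic(X)/\Delta$ is cyclic, engendr\'e par $[X_{O}]$, of order dividing $5$, i.e. a quotient de $\Z/5$. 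The main obstacle is precisely the split case above: when the generic fibre acquires a rational point one must replace an abstract section by a section coming from a $k$-line in order to keep the generator inside $\Delta$, and this is exactly what Th\'eor\`eme \ref{sectiondroite} supplies; the rest is bookkeeping of vertical classes.
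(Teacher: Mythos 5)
Votre démonstration est correcte et suit essentiellement la même voie que celle de l'article : la suite de localisation pour (i), l'identification des composantes $C_{i}$ comme normes de droites pour (ii), puis pour (iii) la dichotomie selon que la conique générique a ou non un point rationnel, le cas isotrope étant traité via le Théorème \ref{sectiondroite} et la relation $5[X_{O}]=\sum_{i}[C_{i}]$ donnant l'annulation par $5$. Vous explicitez simplement davantage les points (i) et (ii) que ne le fait l'article, qui les déclare « clairs ».
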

\begin{proof} Le point $O$ est ici un  point quelconque de  $\P^1(k)$, on n'a pas besoin 
de supposer la fibre $X_{O}$ lisse sur $k$.
L'\'enonc\'e (i) est clair, car le  diviseur d\'efini  par
une  fibre au-dessus d'un point ferm\'e  de $\P^1_{k}$ a sa classe dans
le sous-groupe engendr\'e par   $X_{O}$ dans $\Pic(X)$. Dans $\Pic(X)$, la classe de $5X_{O}$ est \'egale \`a
 la  somme des fibres d\'eg\'en\'er\'ees de $f$, et chacune de ces fibres appartient \`a $\Delta$.
  Notons $\omega= \omega_{X_{\eta}/k(\P^1)}$
 la classe du faisceau canonique. 
 Le groupe $\Pic(X_{\eta})$ est libre de rang 1.
 
  On a
 $\Pic(X_{\eta})=\Z  \omega$, induit par l'image de la classe de $L$, si et seulement si la conique $X_{\eta}/k(\P^1)$ 
 n'a pas de point rationnel, c'est-\`a-dire si et seulement si la fibration $f$ n'admet pas de section. 
Dans ce cas les \'enonc\'es (i) et (ii) donnent   (iii).

Si la fibration $ f$ poss\`ede une section, alors d'apr\`es le th\'eor\`eme \ref{sectiondroite} il existe une
$k$-droite $L'$ section de la fibration et   ne rencontrant pas $L$.
Cette $k$-droite $L'$ est dans $\Delta$ et son image engendre $\Pic(X_{\eta})$.  
Les \'enonc\'es (i) et (ii) donnent alors (iii).
\end{proof}

\begin{rema}
	Dans les articles \cite{SSD} et \cite{freisofos}, les auteurs \'etablissent la bonne
	borne inf\'erieure pour la conjecture de Manin \cite{FMT89} concernant les points rationnels de hauteur
	born\'ee pour les surfaces cubiques lisses poss\'edant deux $k$-droites gauches
	(sur $\Q$ et sur n'importe quel corps de nombres $k$, respectivement). 
	Le th\'eor\`eme \ref{sectiondroite} montre que ces r\'esultats   valent sous l'hypoth\`ese
	(\'equivalente, mais a priori plus faible) qu'il y a une $k$-droite dont la fibration en coniques
	associ\'ee admet une section.
\end{rema}

\section{Le quotient $\Pic(X)/\Delta$ pour une surface cubique lisse}\label{toutescubiques}

Soient $k$ un corps
et  $X \subset \P^3_{k}$ une $k$-surface cubique projective lisse. 
Notons $\Delta=\Delta(X)$ le sous-groupe de $\Pic(X)$   engendr\'e
par les classes de normes de $K$-droites sur $X_{K}$ pour 
toutes les extensions finies s\'eparables de corps $K/k$.

Si la surface $X$ est d\'eploy\'ee sur $k$, c'est-\`a-dire si 
les 27 droites de $X$ sont d\'efinies sur $k$, alors $\Delta(X)=\Pic(X)$.

Nous laissons au lecteur la d\'emonstration du lemme suivant.

\begin{lem} \label{normres}
Soit $K/k$ une extension finie s\'eparable de corps.

(a) La restriction  $\Pic(X) \to \Pic(X_{K})$ induit un homomorphisme
$$\Res_{k,K} : \Pic(X)/\Delta(X) \to \Pic(X_{K})/\Delta(X_{K}).$$

(b) La norme $\Pic(X_{K}) \to \Pic(X)$ induit un homomorphisme
$$\Norm_{K/k} :  \Pic(X_{K})/\Delta(X_{K}) \to \Pic(X)/\Delta(X).$$

(c) Le compos\'e $\Norm_{K/k} \circ \Res_{k,K} $ est la multiplication
par le degr\'e $[K:k]$.

(d) Si la $K$-surface cubique $X_{K}$ est d\'eploy\'ee, alors
$\Pic(X)/\Delta(X)$ est un groupe fini annul\'e par le degr\'e $[K:k]$. $\Box$
\end{lem}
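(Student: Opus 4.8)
Le plan est de vérifier les quatre assertions l'une après l'autre, les parties (b), (c), (d) étant essentiellement formelles une fois (a) acquise.

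Pour (a), je montrerais que $\Res_{k,K}$ envoie $\Delta(X)$ dans $\Delta(X_K)$. Un générateur de $\Delta(X)$ s'écrit $\Norm_{K'/k}(L)$, où $K'/k$ est finie séparable et $L$ une droite de $X_{K'}$. Comme $K'/k$ est séparable, la $k$-algèbre $K \otimes_k K'$ est un produit fini de corps $\prod_i M_i$, chaque $M_i$ étant fini séparable sur $K$ et sur $K'$, et l'on a $X_{K'} \times_X X_K = \coprod_i X_{M_i}$. Appliquant la formule de changement de base plat $p^* \pi_* = \pi'_* p'^*$ pour le carré cartésien associé au morphisme fini $\pi : X_{K'} \to X$ et au morphisme plat $p : X_K \to X$, j'obtiendrais
$$ \Res_{k,K}\bigl(\Norm_{K'/k}(L)\bigr) = \sum_i \Norm_{M_i/K}(L_{M_i}), $$
où $L_{M_i}$ désigne la droite $L$ étendue à $X_{M_i}$. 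Chaque terme étant une norme d'une droite de $X_{M_i}$, il appartient à $\Delta(X_K)$, donc la somme aussi, ce qui établit (a).

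Pour (b), je montrerais que $\Norm_{K/k}(\Delta(X_K)) \subset \Delta(X)$ : un générateur $\Norm_{K''/K}(L)$ de $\Delta(X_K)$ (avec $K''/K$ finie séparable et $L$ une droite de $X_{K''}$) vérifie, par fonctorialité de l'image directe des diviseurs pour $X_{K''} \to X_K \to X$, l'égalité $\Norm_{K/k}(\Norm_{K''/K}(L)) = \Norm_{K''/k}(L)$, qui est un générateur de $\Delta(X)$. Pour (c), la formule de projection pour le morphisme fini plat $X_K \to X$ de degré $[K:k]$ donne $\Norm_{K/k} \circ \Res_{k,K} = [K:k]$ déjà sur $\Pic(X)$, d'où l'assertion après passage au quotient. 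Pour (d), si $X_K$ est déployée alors $\Delta(X_K) = \Pic(X_K)$ (rappelé plus haut), de sorte que $\Pic(X_K)/\Delta(X_K) = 0$ ; pour tout $\alpha$, (c) donne $[K:k]\,\alpha = \Norm_{K/k}(\Res_{k,K}(\alpha)) = 0$, donc $[K:k]$ annule $\Pic(X)/\Delta(X)$. Comme $\Pic(X)$ s'injecte par Hilbert 90 dans le groupe de type fini $\Pic(\X)^{G}$ (où $G=\mathrm{Gal}(\k/k)$) et que $\Pic(\X) \cong \Z^7$, le groupe $\Pic(X)$ est de type fini, et $\Pic(X)/\Delta(X)$, de type fini et annulé par $[K:k]$, est fini.

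Le point délicat sera la formule de changement de base invoquée en (a) : il faudra justifier soigneusement la décomposition $K \otimes_k K' \cong \prod_i M_i$ (séparabilité), l'identification $X_{K'} \times_X X_K = \coprod_i X_{M_i}$, et la compatibilité $p^* \pi_* = \pi'_* p'^*$ des images directes et inverses de classes de diviseurs pour ce carré cartésien (changement de base plat pour un morphisme fini, via l'isomorphisme $\Pic \cong \mathrm{Cl}$ sur la surface lisse). Les trois autres assertions en découlent formellement.
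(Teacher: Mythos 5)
Votre démonstration est correcte et suit exactement la voie que les auteurs laissent au lecteur : la formule de Mackey $\Res_{k,K}\circ\Norm_{K'/k}=\sum_i\Norm_{M_i/K}\circ\Res_{K',M_i}$ issue de la décomposition $K\otimes_k K'\cong\prod_i M_i$ pour (a), la transitivité des normes pour (b), la formule de projection $\pi_*\pi^*=[K:k]$ pour (c), et l'injection $\Pic(X)\hookrightarrow\Pic(\X)^G$ pour la finitude dans (d). Le seul point à rédiger avec soin est bien celui que vous identifiez, la compatibilité $p^*\pi_*=\pi'_*p'^*$ pour le carré cartésien (platitude de $p$ et propreté de $\pi$), qui est standard au niveau des cycles et passe au groupe de Picard sur une surface lisse.
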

 
\begin{prop}\label{bonl}
 Soient $k$ un corps
et  $X \subset \P^3_{k}$ une $k$-surface cubique projective lisse.  Soit $\ell$ un nombre premier.
Le groupe fini $\Pic(X)/\Delta$
n'a pas de $\ell$-torsion pour $\ell\neq 3, 5$.
\end{prop}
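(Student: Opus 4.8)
The plan is to deduce the vanishing of the $\ell$-torsion, for every prime $\ell\neq 3,5$, from the rational-line case already treated in Proposition~\ref{generalfibre2}, by a restriction--corestriction argument built on Lemme~\ref{normres}. Recall first that $\Pic(X)/\Delta$ is indeed finite: over a finite separable extension splitting the $27$ lines the surface becomes split (d\'eploy\'ee), so there $\Delta$ exhausts the whole Picard group, and Lemme~\ref{normres}(d) yields finiteness and annihilation by the degree of that extension. Fix now a prime $\ell\neq 3,5$.

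The combinatorial heart of the argument is the remark that $\operatorname{Gal}(\k/k)$ acts on the finite set of the $27$ lines of $\X$, and that the sizes of its orbits are positive integers summing to $27=3^3$. If $\ell$ divided the size of every orbit it would divide $27$, and hence equal $3$; since $\ell\neq 3$, there is therefore an orbit of some size $m$ with $\ell\nmid m$. The line attached to a point of this orbit is defined over the fixed field $K$ of its stabiliser, a finite separable extension of $k$ with $[K:k]=m$ prime to $\ell$.

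Over $K$ the smooth cubic surface $X_{K}$ now contains a $K$-line, so Proposition~\ref{generalfibre2}(iii) applies to $X_{K}/K$ and shows that $\Pic(X_{K})/\Delta(X_{K})$ is a quotient of $\Z/5$. In particular this group is annihilated by $5$ and, as $\ell\neq 5$, it contains no nonzero element killed by $\ell$. It then remains to conclude by the formalism of Lemme~\ref{normres}. Let $a\in\Pic(X)/\Delta$ satisfy $\ell a=0$. Its image $\Res_{k,K}(a)$ lies in $\Pic(X_{K})/\Delta(X_{K})$ and is killed both by $\ell$ and by $5$, hence vanishes; applying Lemme~\ref{normres}(c) one gets $[K:k]\,a=\Norm_{K/k}\Res_{k,K}(a)=0$. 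As $[K:k]=m$ is prime to $\ell$ while $\ell a=0$, a B\'ezout relation forces $a=0$, so $\Pic(X)/\Delta$ has no $\ell$-torsion.

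The only non-formal ingredient is Proposition~\ref{generalfibre2}, already proved; everything else is the restriction--corestriction bookkeeping of Lemme~\ref{normres}. The step I expect to matter most is the elementary divisibility observation that the total number $27$ of lines is a pure power of $3$: this is exactly what forces an orbit of size prime to $\ell$ for every $\ell\neq 3$, and thereby isolates $\ell=3$, just as the ceiling $\Z/5$ produced by the conic bundle isolates $\ell=5$. The finer analysis needed to treat these two remaining primes $3$ and $5$ is precisely what the full Th\'eor\`eme~\ref{thmgeneral} will have to address and is not part of the present statement.
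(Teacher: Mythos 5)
Votre démonstration est correcte et suit pour l'essentiel la même route que celle de l'article : les deux reposent sur le fait que $27=3^{3}$ force l'existence d'une orbite galoisienne de droites de cardinal premier à $\ell$, sur le formalisme de restriction--corestriction du lemme~\ref{normres}, et sur la borne $\Z/5$ de la proposition~\ref{generalfibre2}. La seule différence, sans incidence, est que l'article passe d'abord au corps fixe d'un $\ell$-sous-groupe de Sylow pour obtenir une droite rationnelle sur le nouveau corps de base, tandis que vous travaillez directement avec le corps de définition d'une droite dans une orbite de cardinal premier à $\ell$ ; les deux variantes concluent de façon identique.
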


\begin{proof}
Soit $K/k$ une extension galoisienne finie, de groupe $G$, d\'eployant la surface $X$,
par exemple la plus petite extension sur laquelle les 27 droites sont d\'efinies.
Soit $G_{\ell}$ un $\ell$-sous-groupe de Sylow de $G$ et soit $K_{\ell } \subset K$
le corps fixe. Il r\'esulte du lemme  \ref{normres}  que le noyau de
$$\Res_{k,K_{l}}: \Pic(X)/\Delta(X) \to  \Pic(X_{K_{\ell}})/\Delta(X_{K_{\ell}})$$
est annul\'e par un entier premier \`a $\ell$, donc est injectif sur la torsion $\ell$-primaire.
Pour \'etablir la proposition, on peut donc supposer que $G$ est un groupe fini
$\ell$-primaire, et, toujours d'apr\`es le lemme \ref{normres} , que $ \Pic(X)/\Delta(X)$ est un groupe fini
$\ell$-primaire.
  Comme $\ell \neq 3$, et qu'il y a 27 droites sur $\X$,
  il existe alors une droite sur $X$. D'apr\`es la proposition
\ref{generalfibre2}, le quotient $\Pic(X)/\Delta$ est annul\'e par $5$.
Comme on a suppos\'e $\ell \neq 5$, on conclut que $\Pic(X)/\Delta$ est nul.
  \end{proof}

\begin{prop}\label{l5} 
Soient $k$ un corps
et  $X \subset \P^3_{k}$ une $k$-surface cubique projective lisse.  Le groupe fini $\Pic(X)/\Delta$
n'a pas de 5-torsion.
\end{prop}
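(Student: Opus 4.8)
Le plan est de reprendre la r\'eduction par les sous-groupes de Sylow de la preuve de la proposition \ref{bonl}. Soit $K/k$ une extension galoisienne finie minimale d\'eployant $X$, de groupe $G=\mathrm{Gal}(K/k)$ ; alors $G$ agit fid\`element sur les $27$ droites, d'o\`u un plongement $G\hookrightarrow W(E_6)$. En rempla\c{c}ant $k$ par le corps fixe d'un $5$-sous-groupe de Sylow de $G$ (ce qui est licite d'apr\`es le lemme \ref{normres}, le noyau de la restriction \'etant annul\'e par un entier premier \`a $5$), on se ram\`ene au cas o\`u $G$ est un $5$-groupe. Comme $|W(E_6)|=51840=2^7\cdot 3^4\cdot 5$, la partie $5$-primaire de $W(E_6)$ est d'ordre $5$, donc $G$ est cyclique d'ordre $1$ ou $5$. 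Le cas trivial donnant une surface d\'eploy\'ee, on suppose $G=\langle\sigma\rangle$ cyclique d'ordre $5$. Par le th\'eor\`eme de Sylow, un tel sous-groupe est unique \`a conjugaison pr\`es dans $W(E_6)$ ; on peut donc supposer que $\sigma$ est un $5$-cycle du sous-groupe $\mathfrak{S}_6\subset W(E_6)$ permutant $6$ points $P_1,\dots,P_6$ \'eclat\'es dans $\P^2$.

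On calcule alors l'action de $\sigma=(P_1P_2P_3P_4P_5)$, fixant $P_6$, sur les $27$ droites de ce mod\`ele : les classes $E_1,\dots,E_6$ des six droites contract\'ees, les $15$ droites $\lambda_0-E_i-E_j$ et les $6$ coniques $2\lambda_0-\sum_{m\neq i}E_m$, o\`u $\lambda_0$ est l'image r\'eciproque d'une droite de $\P^2$. On trouve exactement deux droites fixes, $E_6$ et $C_6=2\lambda_0-\sum_{i=1}^5 E_i$, et cinq orbites de longueur $5$. Comme il existe une $k$-droite, $X$ a un $k$-point et la restriction induit un isomorphisme $\Pic(X)\simeq\Pic(\X)^{\sigma}$ ; ce r\'eseau d'invariants est libre de base $\lambda_0$, $E_6$ et $s:=E_1+\dots+E_5$. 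On notera que $E_6\cdot C_6=0$, de sorte que les deux droites fixes sont gauches.

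Il reste \`a voir que $\Delta=\Pic(X)$. Les deux droites fixes sont d\'efinies sur $k$, donc $E_6\in\Delta$ et $C_6=2\lambda_0-s\in\Delta$. Chaque orbite de longueur $5$ provient d'une droite d\'efinie sur l'extension cyclique $K/k$ de degr\'e $5$, et sa norme est la somme de l'orbite ; en particulier $s\in\Delta$ (orbite des $E_i$) et $5\lambda_0-s-5E_6\in\Delta$ (orbite des $\lambda_0-E_i-E_6$). On en tire $2\lambda_0=C_6+s\in\Delta$ puis $5\lambda_0\in\Delta$, d'o\`u $\lambda_0=5\lambda_0-2\cdot 2\lambda_0\in\Delta$ puisque $\gcd(2,5)=1$. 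Ainsi $\lambda_0,E_6,s$ engendrent $\Pic(X)$ et appartiennent \`a $\Delta$, donc $\Pic(X)/\Delta=0$, en particulier sans $5$-torsion. Le point d\'elicat est la d\'etermination, \`a conjugaison pr\`es, de l'\'el\'ement d'ordre $5$ de $W(E_6)$ et de sa structure d'orbites (notamment le fait que les deux droites fixes sont gauches) : c'est ce qui fait intervenir simultan\'ement un multiple $5\lambda_0$ et un multiple $2\lambda_0$ dans $\Delta$, le pgcd $1$ forc\c{c}ant alors $\lambda_0\in\Delta$.
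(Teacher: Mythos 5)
Votre d\'emonstration est correcte mais, apr\`es la r\'eduction commune, elle suit une route r\'eellement diff\'erente de celle de l'article. Le point de d\'epart est le m\^eme\,: l'argument de normes du lemme \ref{normres} ram\`ene au cas o\`u le groupe de Galois $G$ de l'extension d\'eployante est un $5$-groupe, et le fait que $5$ ne divise pas $27$ fournit une $k$-droite. L'article proc\`ede alors g\'eom\'etriquement\,: la fibration en coniques associ\'ee \`a cette droite, le th\'eor\`eme \ref{sectiondroite} appliqu\'e sur $K$, puis le th\'eor\`eme de Springer (un point de degr\'e impair sur une conique lisse implique un point rationnel) produisent une seconde $k$-droite gauche \`a la premi\`ere\,; la conique plane passant par les $5$ points conjugu\'es donne $2\lambda_{0}\in\Delta$, et l'annulation par $5$ vient de la proposition \ref{generalfibre2}. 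Vous court-circuitez le th\'eor\`eme \ref{sectiondroite} et le th\'eor\`eme de Springer en classifiant l'action galoisienne elle-m\^eme\,: la partie $5$-primaire de $W(E_6)$ \'etant d'ordre $5$, le th\'eor\`eme de Sylow force $G$ \`a \^etre engendr\'e par un $5$-cycle de $\mathfrak{S}_6$ \`a conjugaison pr\`es --- c'est-\`a-dire apr\`es un changement de marquage g\'eom\'etrique, ce qui est licite puisque $W(E_6)$ op\`ere transitivement sur les sextuplets ordonn\'es de droites deux \`a deux gauches, de sorte que les classes $E_{1},\dots,E_{6}$ du nouveau marquage sont encore des droites de $\X$ --- et le calcul des orbites exhibe directement des g\'en\'erateurs de $\Pic(\X)^G$ dans $\Delta$. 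Vos deux droites fixes $C_6=2\lambda_{0}-\sum_{i=1}^5E_{i}$ et $E_6$ sont exactement les droites $L$ et $L'$ de l'article, et la relation d\'efinissant $C_6$ est celle qui y donne $2\lambda_{0}\in\Delta$\,: la fin des deux preuves est essentiellement la m\^eme, seule la construction de la configuration diff\`ere. Votre variante est plus calculatoire mais autonome (elle ne d\'epend ni du \S\,\ref{cubiquesavecdroite} ni de Springer)\,; en contrepartie elle repose sur l'ordre de $W(E_6)$ et sur la conjugaison de ses \'el\'ements d'ordre $5$. Notez enfin que votre conclusion n'exige m\^eme pas l'\'egalit\'e $\Pic(X)=\Pic(\X)^G$ (donc pas l'existence d'un $k$-point)\,: il suffit des inclusions $\Delta\subset\Pic(X)\subset\Pic(\X)^G$ et du fait que $\Delta$ contient des g\'en\'erateurs de $\Pic(\X)^G$.
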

\begin{proof}
Par un argument de norme similaire \`a celui donn\'e dans la proposition \ref{bonl}, on peut supposer que $X$ est d\'eploy\'ee par une extension finie galoisienne $K/k$ de degr\'e une puissance de 5.
  
Comme $27$ n'est pas divisible par 5,
$X$ contient  une $k$-droite $L$.
En utilisant une telle $k$-droite, on d\'efinit une fibration en coniques
$f : X \to \P^1_{k}$. Comme $X_{K}$ est une surface cubique d\'eploy\'ee,
il existe une $K$-droite qui ne rencontre pas la $K$-droite $L_{K}$.
D'apr\`es le th\'eor\`eme  \ref{sectiondroite} appliqu\'e au niveau de $K$, une telle $K$-droite est une section de
$f_{K}$. La fibre g\'en\'erique   $X_{\eta}/k(\P^1)$ est donc une conique
lisse qui poss\`ede un point sur l'extension $K(\P^1)/k(\P^1)$, de degr\'e une puissance de 5, et en particulier de degr\'e impair. Par un th\'eor\`eme bien connu, ceci implique que la conique
 $X_{\eta}/k(\P^1)$ a un $k(\P^1)$-point rationnel. La fibration $f : X \to \P^1_{k}$
 poss\`ede donc une section sur $k$. Appliquant    le th\'eor\`eme  \ref{sectiondroite} au niveau de $k$,
 on obtient l'existence d'une $k$-droite $L' \subset X$ qui est une section de la fibration et ne rencontre pas $L$.

 Si on prend la fibration associ\'ee \`a $L$, on voit qu'elle a 5 fibres g\'eom\'etriques d\'eg\'en\'er\'ees, 
et que toutes leurs composantes soit sont d\'efinies sur $k$, soit 
sont d\'efinies sur la m\^eme extension cyclique
 de degr\'e 5 de $k$.

Si les composantes sont toutes d\'efinies sur $k$, alors
 $\Pic(X)=\Pic(\X)$ et toutes les droites de $X$ sont d\'efinies sur $k$.
On a donc alors $ \Pic(X)/\Delta=0$.  

Supposons que les composantes ne sont pas d\'efinies sur $k$.
 La droite $L'$ rencontre une seule fois chaque fibre g\'eom\'etrique d\'eg\'en\'er\'ee.
Dans chaque fibre, on peut donc choisir la composante qui ne rencontre pas $L'$.
La r\'eunion de ces  5 composantes $R_{i}$ (qui forment une orbite du groupe de Galois) 
et de $L'$ forme un sextuplet de droites
gauches deux \`a deux, globalement rationnel. On peut contracter ce
sextuplet sur un $\P^2_{k}$. Dans le plan, consid\'erons la conique qui contient
les 5 points conjugu\'es images des $R_{i}$. Elle est d\'efinie sur $k$. Son image inverse
dans $X$ est une $k$-droite, dont la classe est $2 \lambda_{0} - \sum_{i=1}^5 R_{i}$,
o\`u l'on a not\'e $\lambda_{0} \in \Pic(X)$ la classe de l'image r\'eciproque d'une $k$-droite de $\P^2$.
On voit donc que $2\lambda_{0}$  est dans $\Delta$.  Le groupe de Picard de 
$\overline X$ est la somme directe $\Z \lambda_{0} \oplus \Z L' \oplus \sum_{i=1}^5 \Z R_{i}$.
Le groupe de Picard de $X$ est donc engendr\'e par $\lambda_{0}$, $L'$ et
 la somme $ \sum_{i=1}^5 R_{i}$. On voit donc que  $\Pic(X)/\Delta$ est annul\'e par 2.
 Comme il est annul\'e par 5 (la proposition \ref{generalfibre2} s'applique, car $X$ poss\`ede une $k$-droite), il est nul.
 \end{proof}

La combinaison des propositions \ref{generalfibre2}, \ref{bonl} et  \ref{l5}  
 donne le th\'eor\`eme \ref{thmgeneral}.

\begin{rema} On ne peut omettre l'hypoth\`ese d'existence d'une droite $k$-rationnelle
dans le th\'eor\`eme \ref{thmgeneral} (ii).
Soit $K/k$ une extension cubique cyclique de corps.
Prenons dans $\P^2_{k}$ deux points ferm\'es $M_{1}$ et $M_{2}$ de degr\'e 3 en position g\'en\'erale,
chacun d\'efini sur $K$.
Sur la surface cubique $X \subset \P^3_{k}$ obtenue par \'eclatement de $M_{1}$ et $M_{2}$, dont on sait bien d\'ecrire les 27 droites g\'eom\'etriques
en termes des 6 points de $\P^2_{\k}$ d\'efinis par $M_{1} \cup M_{2}$, des droites  passant par 2 de ces 6 points, et des coniques passant par 5 de ces 6 points,
on v\'erifie que 
 la classe $\lambda_{0} \in \Pic(X)$  de l'image r\'eciproque d'une $k$-droite de $\P^2_{k}$ n'est pas dans $\Delta \subset \Pic(X)$, et l'on a $3\lambda_{0} \in \Delta$.
\end{rema}

On a le corollaire :
 
\begin{cor}
Soient $k$ un corps
et  $X \subset \P^3_{k}$ une $k$-surface cubique projective lisse
 poss\'edant une $k$-droite.
Soit $U$ le compl\'ementaire des droites g\'eom\'etriques et ${\overline k}[U]^*$ le groupe des
fonctions rationnelles sur $\X$ inversibles sur $\overline{U}$.
Alors le r\'eseau $\hat{T}:={\overline k}[U]^*/{\overline k}^*$ est un module galoisien coflasque :
pour tout sous-groupe ferm\'e $H$ du groupe de Galois absolu $G$ 
de $k$, 
on a
$H^1(H, \hat{T})=0$.
La suite exacte
$$0 \to {\overline k}[U]^*/{\overline k}^* \to \oplus_{i=1}^{27} \Z l_{i} \to \Pic({\overline X}) \to 0,$$
o\`u les $l_{i}$  sont les 27 droites de $\overline{X}$, et le module galoisien 
$\oplus_{i=1}^{27} \Z l_{i}$ est un module de permutation,
est une r\'esolution coflasque du module galoisien $\Pic({\overline X})$.
\end{cor}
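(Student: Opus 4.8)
Le plan est de d\'eduire la coflasquit\'e du th\'eor\`eme \ref{thmgeneral} (ii). On part de la suite exacte de localisation pour l'ouvert $\overline U = \X \setminus \bigcup_{i=1}^{27} l_{i}$ de la surface lisse $\X$, dont le compl\'ementaire est la r\'eunion des $27$ droites g\'eom\'etriques :
$$\overline k[U]^*/\overline k^* \to \bigoplus_{i=1}^{27} \Z l_{i} \by{\operatorname{div}} \Pic(\X) \to \Pic(\overline U) \to 0.$$
La premi\`ere fl\`eche associe \`a une fonction inversible sur $\overline U$ son diviseur sur $\X$ ; elle est injective car $\X$ est propre (une fonction \`a diviseur nul est constante). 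Les $27$ droites engendrant $\Pic(\X)$ (fait rappel\'e dans l'introduction), on a $\Pic(\overline U)=0$, d'o\`u la suite exacte courte de l'\'enonc\'e, $\hat T$ s'identifiant au noyau de $\bigoplus \Z l_{i} \to \Pic(\X)$. Le module galoisien $P := \bigoplus_{i=1}^{27} \Z l_{i}$ est un module de permutation, $G$ permutant les $27$ droites.

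Soit $H$ un sous-groupe ferm\'e de $G$, de corps fixe $E = \overline k^{H}$ (de sorte que $\overline k$ est une cl\^oture s\'eparable de $E$ et $H$ son groupe de Galois absolu). Comme $P$ est un module de permutation, on a $H^1(H,P)=0$ (lemme de Shapiro, les homomorphismes continus d'un groupe profini vers $\Z$ \'etant nuls). La suite exacte longue de cohomologie associ\'ee \`a la suite exacte courte ci-dessus fournit alors
$$H^1(H, \hat T) = \Coker\bigl( P^{H} \longrightarrow \Pic(\X)^{H} \bigr).$$
Comme $X$ poss\`ede une $k$-droite, donc un $k$-point, $X_{E}$ poss\`ede un $E$-point, et la suite de Hochschild--Serre en bas degr\'e (jointe \`a Hilbert 90) donne un isomorphisme $\Pic(X_{E}) \iso \Pic(\X)^{H}$.

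Il reste \`a identifier l'image de $P^{H}$. Un \'el\'ement de $P^{H}$ est une combinaison \`a coefficients constants sur les $H$-orbites de droites ; une telle orbite correspond \`a une droite d\'efinie sur une extension finie s\'eparable $K/E$ (son corps de d\'efinition), et l'image dans $\Pic(X_{E})$ de la somme des droites de l'orbite est la classe $\Norm_{K/E}$ de cette droite. R\'eciproquement, par la formule de projection, la norme d'une droite d\'efinie sur une extension finie s\'eparable quelconque de $E$ est un multiple entier d'une telle classe d'orbite. L'image de $P^{H} \to \Pic(X_{E})$ est donc exactement le sous-groupe $\Delta(X_{E})$ engendr\'e par les normes de droites, et l'on obtient $H^1(H, \hat T) = \Pic(X_{E})/\Delta(X_{E})$. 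Or $X_{E}$ poss\`ede la $E$-droite d\'eduite par changement de base de la $k$-droite de $X$ ; le th\'eor\`eme \ref{thmgeneral} (ii) donne donc $\Delta(X_{E}) = \Pic(X_{E})$, d'o\`u $H^1(H, \hat T)=0$. Ainsi $\hat T$ est coflasque, et la suite exacte courte, avec $P$ de permutation et $\hat T$ coflasque, en est par d\'efinition une r\'esolution coflasque de $\Pic(\X)$. Le point principal est l'identification du conoyau avec $\Pic(X_{E})/\Delta(X_{E})$ (qui traduit exactement la coflasquit\'e en termes de normes de droites) ; apr\`es quoi le th\'eor\`eme \ref{thmgeneral} fournit directement l'annulation.
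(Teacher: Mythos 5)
Votre démonstration est correcte et suit essentiellement la même voie que celle de l'article : annulation de $H^1(H,P)$ pour le module de permutation $P=\oplus_{i}\Z l_{i}$, identification de $H^1(H,\hat T)$ au conoyau de $P^H\to\Pic(\X)^H=\Pic(X_E)$ (via l'existence d'un point rationnel), puis identification de l'image de $P^H$ avec $\Delta(X_E)$ et application du théorème \ref{thmgeneral} (ii). Vous explicitez seulement davantage deux points que l'article laisse implicites (la construction de la suite exacte de localisation et la double inclusion image de $P^H=\Delta(X_E)$), ce qui ne change pas la structure de l'argument.
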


\begin{proof}
 Lorsque l'on prend les points fixes sous l'action du groupe $G$, on
obtient la suite exacte
$$(\oplus_{i=1}^{27} \Z l_{i})^G \to  \Pic({\overline X})^G \to H^1(G,{\overline k}[U]^*/{\overline k}^* ) \to  H^1(G,\oplus_{i=1}^{27} \Z l_{i}).$$

On a $\Pic(X) =  \Pic(\X)^G$ car $X$ poss\`ede un $k$-point.
C'est un r\'esultat tr\`es bien connu, qu'on peut par exemple \'etablir ainsi. Pour toute $k$-vari\'et\'e projective et lisse g\'eom\'etriquement int\`egre, on a  une suite exacte
$$0 \to \Pic(X) \to  \Pic(\X)^G \to \Br(k) \to \Br(X)$$
fournie par la suite spectrale de Leray pour la projection $X \to \Spec(k)$ et le faisceau \'etale
$\G_{m,X}$ sur $X$. Un point $k$-rationnel fournit une section de $\Br(k) \to \Br(X)$.

L'image du groupe $(\oplus_{i=1}^{27} \Z l_{i})^G$ dans $\Pic(X) =  \Pic(\X)^G$ est 
le sous-groupe $\Delta$, \'egal \`a $\Pic(X) $ d'apr\`es le th\'eor\`eme  \ref{thmgeneral} (ii).
Le module galoisien $ \oplus_{i=1}^{27} \Z l_{i}$ est un $G$-module de permutation,
donc $H^1(G,\oplus_{i=1}^{27} \Z l_{i})=0$  
(lemme de Shapiro et nullit\'e des groupes $H^1(H,\Z)$ pour les sous-groupes ferm\'es de $G$).
On a donc bien $H^1(G,{\overline k}[U]^*/{\overline k}^*)=0$.
Le m\^{e}me argument donne  $H^1(H,{\overline k}[U]^*/{\overline k}^*)=0$ 
pour
tout sous-groupe ferm\'e $H$ de $G$.
 \end{proof}
 \begin{rema}
 Le $G$-module $\Pic(\X)$ est auto-dual, via la forme d'intersection.
On obtient donc  une r\'esolution flasque de $\Pic(\X)$ en prenant la suite duale de celle de l'\'enonc\'e via $\Hom_{\Z}(., \Z)$.
\end{rema}

\end{document}